\theoremstyle{plain}
\newtheorem{thm}{Theorem}
\newtheorem*{thm*}{Theorem}
\newtheorem*{prop*}{Proposition}
\newtheorem{lem}[thm]{Lemma}
\newtheorem{prop}[thm]{Proposition}
\newtheorem{cor}[thm]{Corollary}
\theoremstyle{definition}
\newtheorem{rmk}[thm]{Remark}
\newtheorem{eg}[thm]{Example}
\newcommand{\excise}[1]{}
\renewcommand{\setminus}{\smallsetminus}
\def\ZZ{{\mathbb Z}}
\def\NN{{\mathbb N}}
\def\CC{{\mathbb C}}
\def\R{{\mathbb R}}
\def\PP{{\mathbb P}}
\newcommand{\dsubseteq}{\ensuremath{\,\subseteq\,}}
\newcommand{\dleq}{\ensuremath{\,\leq\,}}
\newcommand{\dgeq}{\ensuremath{\,\geq\,}}
\newcommand{\equ}{\ensuremath{ \,=\, }}
\newcommand{\st}[1]{\ensuremath{ \left\{ #1 \right\} }}
\def\OO{\mathcal{O}}
\newcommand{\vol}[2]{\ensuremath{{\rm vol}_{#1}\left( #2 \right) } }
\DeclareMathOperator{\Pic} {Pic}
\DeclareMathOperator{\ord} {ord}
\begin{document}

\title{Okounkov bodies of finitely generated divisors}

\thanks{During this project AK was partially supported by  DFG-Forschergruppe 790 ``Classification of Algebraic Surfaces and Compact Complex Manifolds'', and the OTKA Grants 77476 and  81203 by the Hungarian Academy of Sciences.  DA was partially supported by NSF Grant DMS-0902967.}

\author{Dave Anderson}
\address{Department of Mathematics\\University of Washington\\Seattle, WA 98195}
\email{dandersn@math.washington.edu}
\author{Alex K\"uronya}
\address{Budapest University of Technology and Economics, Department of Algebra, P.O. Box 91, H-1521 Budapest, Hungary}
\address{Albert-Ludwigs-Universit\"at Freiburg, Mathematisches Institut, Eckerstra\ss e 1, D-79104 Frei\-burg, Germany}
\email{{\tt alex.kueronya@math.uni-freiburg.de}}
\author{Victor Lozovanu}
\address{Queen's University, Department of Mathematics and Statistics,  Kingston, Ontario, K7L3N6, Canada}
\email{vicloz@mast.queensu.ca}

\date{September 11, 2012}

\begin{abstract}
We show that the Okounkov body of a big divisor with finitely generated section ring is a rational simplex, for an appropriate choice of flag; furthermore, when the ambient variety is a surface, the same holds for every big divisor.  Under somewhat more restrictive hypotheses, we also show that the corresponding semigroup is finitely generated.
\end{abstract}

\maketitle

Consider an $n$-dimensional irreducible projective variety $X$, with a big divisor $L$.  (All divisors are tacitly assumed to be Cartier, and we always work over the complex numbers.)  
The \emph{Newton-Okounkov body}---or \emph{Okounkov body}, for short---of $L$ is a convex body $\Delta_{Y_\bullet}(L) \subseteq \R^n$ 
which captures interesting geometric information about $L$.  For example, the volume of $L$ (defined as the rate of growth of sections of $mL$) is equal to the Euclidean volume of $\Delta_{Y_\bullet}(L)$, up to a normalizing factor of $n!$.  The construction of $\Delta_{Y_\bullet}(L)$ depends on the choice of an \emph{admissible flag} of subvarieties of $X$, 
that is, a chain of irreducible subvarieties $X=Y_0 \supset Y_1 \supset \cdots \supset Y_n$, such that each $Y_i$ has codimension $i$ in $X$ 
and is nonsingular at the point $Y_n$.  We refer to the seminal papers of Kaveh-Khovanskii \cite{KK08} and Lazarsfeld-Musta\c t\u a \cite{LM08} 
for more details, but emphasize that in general the Okounkov body depends quite sensitively on the choice of flag.  

In many cases of interest, including the original context considered by Okounkov \cite{O96}, the convex body $\Delta_{Y_\bullet}(L)$ is in fact 
a rational polytope.  However, this is not always true: even when $X$ is a  Mori dream space and $L$ is an ample divisor, there are examples where one can choose a flag to produce a non-polyhedral Okounkov body (see \cite{KLM1}).  

The question thus arises: given a big divisor $L$ on an irreducible projective variety $X$, is there an admissible flag with respect to which the corresponding Okounkov body is a rational polytope? 

For the answer to be ``yes'', a necessary condition is that the volume of $L$ be a rational number.  This is guaranteed in the case when the section ring of $L$ is finitely generated, so let us impose this hypothesis as well.

Our main result is an affirmative answer to this question for divisors with finitely generated section rings. 

\begin{thm}\label{thm:main_intro}
Let $X$ be a normal complex projective variety, and let $L$ be a big divisor with finitely generated section ring.  Then there exists an admissible flag $Y_\bullet$ on $X$ such that the Newton-Okounkov body $\Delta_{Y_\bullet}(L)$ is a rational simplex. 
\end{thm}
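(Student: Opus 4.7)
The plan has two main reductions. First, use the finite generation hypothesis to reduce to the case of an ample divisor on the ample model. Second, handle the ample case by induction on dimension, using a flag built from general members of large multiples of the ample class. For the first reduction, finite generation of $R(X,L) = \bigoplus_m H^0(X, mL)$ provides the ample model $Z := \mathrm{Proj}(R(X,L))$ (normalized if necessary) of the same dimension $n$ as $X$, equipped with an ample divisor $A$. After passing to a smooth birational model $\pi : \tilde X \to X$ that also admits a morphism $f : \tilde X \to Z$ resolving the rational map $X \dashrightarrow Z$, one obtains, for some sufficiently divisible $m \geq 1$, a relation $\pi^*(mL) \sim f^*A + E$ with $E \geq 0$ satisfying $f_* \OO_{\tilde X}(kE) = \OO_Z$ for all $k \geq 0$, so that $H^0(\tilde X, k(f^*A + E)) = H^0(Z, kA)$ canonically via $s \mapsto f^*s \cdot 1_{kE}$.

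For the ample case, I would construct the flag $Z_\bullet$ on $Z$ inductively: choose $Z_1 \in |m_1 A|$ very general for $m_1 \gg 0$, so that by Bertini $Z_1$ is irreducible, normal, and smooth at some point, and $A|_{Z_1}$ is ample on $Z_1$. By induction, there is an admissible flag $Z_1 \supset Z_2 \supset \cdots \supset Z_n$ making $\Delta_{Z_\bullet|_{Z_1}}(A|_{Z_1})$ a rational simplex in $\R^{n-1}$. The key slice computation, modeled on Lazarsfeld--Musta\c{t}\u{a}, writes any $s \in H^0(kA)$ with $\ord_{Z_1}(s) = j$ as $s = t_1^j \cdot s'$ with $t_1$ a defining section of $Z_1$ and $s' \in H^0(k(1-tm_1)A)$ for $t = j/k$, and uses asymptotic surjectivity of restriction to $Z_1$ (which holds for ample $A$) to identify the slice of $\Delta_{Z_\bullet}(A)$ at time $t \in [0, 1/m_1]$ as $(1-tm_1)\,\Delta_{Z_\bullet|_{Z_1}}(A|_{Z_1})$. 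The body $\Delta_{Z_\bullet}(A)$ is therefore the cone over a rational simplex with apex $(1/m_1, \mathbf{0})$, hence itself a rational simplex with $n+1$ vertices.

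Next, lift $Z_\bullet$ to an admissible flag $\tilde Y_\bullet$ on $\tilde X$ by taking strict transforms $\tilde Y_i = f^{-1}_*(Z_i)$; genericity and the fact that $f$ has connected fibers (being the map to the ample model) ensure irreducibility at each stage. A direct calculation using the factorization $s \mapsto f^*s \cdot 1_{kE}$ gives
\[
\Delta_{\tilde Y_\bullet}(\pi^* L) \;=\; \tfrac{1}{m}\bigl(\Delta_{Z_\bullet}(A) \,+\, \nu_{\tilde Y_\bullet}(E)\bigr),
\]
which is a rational simplex. Finally, descend to $X$ by setting $Y_i := \pi(\tilde Y_i)$: generic choices arrange that no $\tilde Y_i$ is $\pi$-exceptional (so $\mathrm{codim}\, Y_i = i$), and that $\tilde Y_n$ lies in the isomorphism locus of $\pi$ over the smooth locus of $X$ (which is open dense by normality); then $Y_\bullet$ is admissible on $X$ and $\Delta_{Y_\bullet}(L) = \Delta_{\tilde Y_\bullet}(\pi^*L)$, completing the argument.

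The main obstacle is the slice computation in the ample case, namely verifying that $\Delta_{Z_\bullet}(A)_t = (1-tm_1)\,\Delta_{Z_\bullet|_{Z_1}}(A|_{Z_1})$ as a genuine equality of convex bodies (requiring care with asymptotic surjectivity of restriction on $Z$, and arranging the flag to avoid singularities of $Z$ at each inductive step). A secondary technical difficulty is ensuring at each stage that the flag $\tilde Y_\bullet$ is compatible simultaneously with $\pi$ (to descend to $X$) and with $f$ (to pull back from $Z$), so that the Okounkov body identifications on $\tilde X$ correctly match both.
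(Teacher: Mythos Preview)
Your outline is correct and close in spirit to the paper's argument, but the route differs in one substantive way.  The paper does not descend to the ample model $Z=\mathrm{Proj}\,R(X,L)$; instead it stays on a birational modification $f\colon X'\to X$ and works with the big and globally generated divisor $D=f^*(pL)-N$ directly on $X'$.  To handle this case (their Theorem~\ref{thm:big and globally generated}) they need an approximation step: they prove the slice formula for $D$ by sandwiching $\Delta_{Y_\bullet}(D)$ between the bodies $\Delta_{Y_\bullet}(\tfrac{1}{m}A+D)$ for ample $A$ and letting $m\to\infty$ (their Lemma~\ref{lem:nested} and Proposition~\ref{prop:restriction for big and globally generated}).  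Your approach bypasses this entirely by passing to $Z$, where $A$ is genuinely ample and Proposition~\ref{slices} applies without any limiting argument; you then lift the flag back via strict transforms.  Your route is therefore more direct for the main theorem, while the paper's route yields the big--and--globally--generated case as a result of independent interest and introduces an approximation technique that may be useful elsewhere (as they remark).

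Two small points.  First, the sheaf-level statement $f_*\OO_{\tilde X}(kE)=\OO_Z$ is stronger than what you actually use, and it is not obviously true in this generality; what you need (and what does follow from finite generation, exactly as in \cite[Example~2.1.31]{PAG}) is the equality of global sections $H^0(\tilde X,k(f^*A+E))=H^0(Z,kA)$ via $s'\mapsto f^*s'\cdot 1_{kE}$.  Second, once you choose the flag point $Z_n$ in the locus where $f$ is an isomorphism and away from the image of $\mathrm{Supp}(E)$, the translation vector $\nu_{\tilde Y_\bullet}(E)$ is in fact zero (each $\tilde Y_i$, being the strict transform of a general complete intersection, is not contained in $E$), so your displayed formula simplifies to $\Delta_{\tilde Y_\bullet}(\pi^*L)=\tfrac{1}{m}\Delta_{Z_\bullet}(A)$ --- which is exactly how the paper arranges things.
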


\noindent
In fact, we do not require normality if $L$ is globally generated (Theorem~\ref{thm:big and globally generated}).  The basic idea is to construct the flag by intersecting general elements of the linear series $|L|$.  This works directly when $L$ is ample, and can be modified for the other cases.

As A.~Khovanskii observed upon hearing our results, Theorem~\ref{thm:main_intro} shares a philosophical connection with the fact that after a change of variables, any given polynomial has a simplex as its Newton polyhedron.

In the case of surfaces, any big divisor has a unique Zariski decomposition (see \cite[Chapter 2.3.E]{PAG}).  The volume of a big divisor on a smooth surface is therefore always a rational number (\cite[Corollary 2.3.22]{PAG}).  Using the existence of Zariski decomposition, we show in Corollary~\ref{cor:surface} that for any big divisor on a smooth surface there exists a flag such that the Okounkov body is a rational simplex.

Before delving into the proofs, we briefly review the construction of the Newton-Okounkov body.  As always, $X$ is an $n$-dimensional irreducible projective variety, and $L$ is a big divisor on $X$.  Orders of vanishing along components of the flag give rise to a rank $n$ valuation $\nu=\nu_{Y_\bullet}$, which takes a nonzero section $s\in H^0(X,mL)$ to an integer vector $\nu(s) = (\nu_1(s),\ldots,\nu_n(s)) \in \ZZ^n$.  Collecting these for all multiples $m$, one obtains a semigroup
\[
  \Gamma_{Y_\bullet}(L) = \{ (m,\nu(s)) \,|\, s \in H^0(X,mL)\setminus\{0\} \} \subseteq \NN \times \ZZ^n.
\]
Finally, the convex body is obtained by slicing the closed convex hull of the semigroup:
\[
  \Delta_{Y_\bullet}(L) = \overline{\mathrm{Conv}(\Gamma_{Y_\bullet}(L))} \cap (\{1\}\times\R^n).
\]

From the construction, it is clear that the Okounkov body is a rational polytope whenever the semigroup $\Gamma_{Y_\bullet}(L)$ is finitely generated.  In general, finite generation of the semigroup is a quite subtle issue, with interesting consequences: for instance, it implies the existence of a completely integrable system on $X$, by recent work of Harada and Kaveh \cite{HK12}.  Our methods lead to a criterion for $\Gamma_{Y_\bullet}(L)$ to be finitely generated, under somewhat more restrictive hypotheses (Proposition~\ref{prop:fg}).

\bigskip

We now turn to the proof of Theorem~\ref{thm:main_intro}.  We will give the argument in stages: first for ample divisors, then for big and globally generated divisors, and finally the more general case of the theorem.

The key ingredient is a fact relating slices of the Okounkov body to restrictions along the flag $Y_\bullet$.  For this, we need some notation: given an admissible flag $X = Y_0 \supset Y_1 \supset \cdots \supset Y_n$, the \emph{restricted flag} $Y_\bullet|_{Y_1}$ is just the flag $Y_1 \supset \cdots \supset Y_n$, considered as an admissible flag on $Y_1$.  The \emph{augmented base locus} of a divisor $D$, denoted $\mathbf{B}_+(D)$, is by definition the stable base locus of a perturbation $D-\epsilon A$, for an ample divisor $A$ and a small $\epsilon>0$; see \cite[Chapter 10.3]{PAG} for details and examples.  (The reader unfamiliar with these notions need not worry, since in our main applications $\mathbf{B}_+(D)$ will be empty.)

\begin{prop}[{\cite[Proposition 3.1]{KLM1}}]\label{slices}
Let $X$ be an irreducible projective variety of dimension $n$, equipped with an admissible flag $Y_{\bullet}$.  Suppose that $D$ is a divisor such that $Y_1\nsubseteq \mathbf{B}_+(D)$.  Then
\[
 \Delta_{Y_{\bullet}}(D)\cap \big(\{s\}\times \mathbb{R}^{n-1}\big) = \Delta_{Y_{\bullet}|_{Y_1}}((D-sY_{1})|_{Y_{1}})
\]  
whenever $D-sY_{1}$ is ample.
\end{prop}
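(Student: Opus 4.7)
The plan is to relate sections of $mD$ having a prescribed order of vanishing along $Y_1$ with sections of $(mD-aY_1)|_{Y_1}$, and then deduce each inclusion by taking limits.

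First, I would record the key correspondence. Given a nonzero section $\sigma\in H^0(X,mD)$ with $\ord_{Y_1}(\sigma)=a$, dividing by the $a$-th power of a local equation for $Y_1$ produces a section $\sigma'\in H^0(X,mD-aY_1)$ not vanishing identically along $Y_1$, and restriction yields a nonzero $\bar\sigma\in H^0(Y_1,(mD-aY_1)|_{Y_1})$. By the very construction of the valuations,
\[
\bigl(\nu_2(\sigma),\ldots,\nu_n(\sigma)\bigr)\;=\;\nu_{Y_\bullet|_{Y_1}}(\bar\sigma).
\]
Conversely, given any $\bar\tau\in H^0(Y_1,(mD-aY_1)|_{Y_1})$ that lifts to some $\tau'\in H^0(X,mD-aY_1)$, multiplying $\tau'$ by the canonical section of $\OO_X(aY_1)$ yields $\tau\in H^0(X,mD)$ with $\nu_1(\tau)=a$ and tail coordinates equal to $\nu_{Y_\bullet|_{Y_1}}(\bar\tau)$.

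For the inclusion $\supseteq$, it suffices by density to treat rational $s=a/m$ with $mD-aY_1$ ample. Pick $v\in\Delta_{Y_\bullet|_{Y_1}}((D-sY_1)|_{Y_1})$ and sections $\bar\tau_k\in H^0(Y_1, k(mD-aY_1)|_{Y_1})$ whose normalized valuations converge to $mv$. The short exact sequence
\[
0\to \OO_X(k(mD-aY_1)-Y_1)\to \OO_X(k(mD-aY_1))\to \OO_{Y_1}(k(mD-aY_1)|_{Y_1})\to 0
\]
together with Serre vanishing applied to the ample divisor $mD-aY_1$ makes the restriction map surjective for $k\gg0$, so each $\bar\tau_k$ lifts; multiplying by the canonical section of $\OO_X(kaY_1)$ yields points of $\Gamma_{Y_\bullet}(D)$ at level $km\cdot s$ whose tail coordinates realize $v$ in the limit. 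This places $(s,v)$ in $\Delta_{Y_\bullet}(D)$.

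For the inclusion $\subseteq$, a point $(s,w)$ in the slice is approximated by normalized vectors $\tfrac1m\nu(\sigma_m)$ for $\sigma_m\in H^0(X,mD)$ with $\nu_1(\sigma_m)/m\to s$. The setup correspondence places $\tfrac{1}{m}(\nu_2(\sigma_m),\ldots,\nu_n(\sigma_m))$ inside $\Delta_{Y_\bullet|_{Y_1}}\bigl((D-\tfrac{\nu_1(\sigma_m)}{m}Y_1)|_{Y_1}\bigr)$ (interpreted for $\R$-divisors); since $D-sY_1$ is ample and $\tfrac{\nu_1(\sigma_m)}{m}\to s$, continuity of Okounkov bodies in the numerical class of a big $\R$-divisor (\cite{LM08}) forces $w\in\Delta_{Y_\bullet|_{Y_1}}((D-sY_1)|_{Y_1})$.

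The main obstacle is the $\supseteq$ inclusion: one must produce enough sections of $mD$ with prescribed $\nu_1$, which hinges on surjectivity of the restriction $H^0(X,mD-aY_1)\to H^0(Y_1,(mD-aY_1)|_{Y_1})$ uniformly along a sequence with $a/m\to s$. The hypothesis that $mD-aY_1$ is ample (equivalently, that $D-sY_1$ is ample together with rational approximation) feeds directly into a vanishing statement for $H^1(X,mD-(a+1)Y_1)$, which is what makes the lifting possible; the condition $Y_1\not\subseteq\mathbf{B}_+(D)$ is used to ensure $\nu_1$ actually takes finite values and that the valuation makes sense on a cofinal set of multiples. The closure/continuity step in the $\subseteq$ direction is technical but standard once the setup correspondence is in place.
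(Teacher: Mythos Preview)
The paper does not give its own proof of this proposition; it is quoted from \cite[Proposition~3.1]{KLM1}, with a remark that the argument there (which passes through \cite[Theorem~4.26]{LM08}) requires the added hypothesis $Y_1\not\subseteq\mathbf{B}_+(D)$. Your outline---the section/restriction correspondence, Serre-vanishing lifts for the inclusion $\supseteq$, continuity of Okounkov bodies for the inclusion $\subseteq$---is precisely the standard route underlying those references, and it is correct.

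One correction worth flagging: your explanation of where the hypothesis $Y_1\not\subseteq\mathbf{B}_+(D)$ enters is off. The valuation $\nu_1$ is always defined on nonzero sections, so that is not the issue. In the cited proof, the slice is first identified with the \emph{restricted} Okounkov body $\Delta_{X|Y_1}(D-sY_1)$, built from the image of $H^0(X,\cdot)\to H^0(Y_1,\cdot)$; the $\mathbf{B}_+$ hypothesis is then what allows one to replace this restricted body by the full $\Delta_{Y_\bullet|_{Y_1}}((D-sY_1)|_{Y_1})$. Your direct Serre-vanishing lift under the ampleness assumption on $D-sY_1$ in effect reproves that identification, which is why the hypothesis does not surface explicitly in your version.
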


\begin{rmk}
The original statement found in \cite[Proposition 3.1]{KLM1} does not include the additional condition about the augmented base locus, but without this hypothesis, there is a gap in the proof: it relies on a statement about the slices of Okounkov bodies (\cite[Theorem 4.26]{LM08}) whose proof requires this additional condition.  In all the applications of this slicing techinique considered in \cite{KLM1}, these additional condition is satisfied automatically. The proof of Proposition~\ref{slices}, as stated here, is the same as the proof given in \cite{KLM1}, which also goes through for $X$ irreducible but not necessarily smooth.
\end{rmk}

Our starting point is the following useful observation. 

\begin{prop}\label{prop:very ample}
 Let $L$ be a very ample divisor on $X$ and let $E_1,\dots,E_n$ be sections in the linear series $|L|$ defining an admissible flag $Y_\bullet$, by $Y_k = E_1\cap\dots\cap E_k$ for $1\leq k\leq n-1$, and $Y_n$ a point in $E_1\cap \cdots \cap E_n$.  Then $\Delta_{Y_\bullet}(L)$ is a simplex defined by the inequalities
\[
 x_1\dgeq 0\, ,\, \dots \,,\, x_n\dgeq 0\, ,\, x_1+\dots+x_{n-1}+\frac{1}{c}x_n \dleq 1,
\]
or equivalently, it is the convex hull of the $n+1$ points
\[
  (0,0,\ldots,0), \, (1,0,\ldots,0), \, (0,1,\ldots,0), \ldots, \, (0,\ldots,0,c),
\]
where $c=(L^n)$ is the degree of $L$.
\end{prop}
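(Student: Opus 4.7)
The plan is to argue by induction on $n = \dim X$, using Proposition~\ref{slices} to peel off one codimension at each step.

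For the base case $n = 1$, $L$ is a very ample divisor of degree $c$ on a curve $X$ and the flag is $X \supset \{p\}$ for $p \in E_1$. One verifies $\Delta_{Y_\bullet}(L) = [0, c]$ directly: $\nu_1(s) = \mathrm{ord}_p(s) \leq mc$ for $s \in H^0(X, mL)\setminus\{0\}$ gives the upper bound, very ampleness furnishes a section of $L$ not vanishing at $p$ (so $0 \in \Delta$), and Riemann--Roch together with the filtration by order of vanishing at $p$ forces $h^0(X, mL) = mc - g + 1$ distinct values of $\nu_1$ in $\{0, 1, \ldots, mc\}$, so $\{\nu_1(s)/m\}$ becomes dense in $[0, c]$.

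For the inductive step, since $L$ is ample we have $\mathbf{B}_+(L) = \emptyset$, so Proposition~\ref{slices} applies at every rational $s \in [0, 1)$. The critical fact is that $Y_1 = E_1 \sim L$, which gives $(L - sY_1)|_{Y_1} \sim_\Q (1-s)L|_{Y_1}$, ample on $Y_1$ for $s < 1$. Using homogeneity and $\Q$-linear equivalence invariance of the Okounkov body, the proposition yields
\[
  \Delta_{Y_\bullet}(L) \cap (\{s\} \times \R^{n-1}) \;=\; (1-s)\, \Delta_{Y_\bullet|_{Y_1}}(L|_{Y_1}).
\]
Applying the inductive hypothesis to $L|_{Y_1}$ (still very ample on $Y_1$) together with the admissible flag given by the restrictions $E_2|_{Y_1}, \ldots, E_n|_{Y_1}$, the right-hand side is $(1-s)$ times the simplex with vertices $(0, \ldots, 0), (1, 0, \ldots, 0), \ldots, (0, \ldots, 0, c')$ in $\R^{n-1}$, where by the projection formula and $Y_1 \sim L$ we have $c' = (L|_{Y_1})^{n-1} = L^{n-1} \cdot Y_1 = L^n = c$. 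As $s$ sweeps through $[0, 1)$ these affine slices trace out precisely the target simplex; the apex $(1, 0, \ldots, 0)$ is recovered either as the $s \to 1^-$ limit or concretely as $\nu(E_1)$. Closedness of $\Delta_{Y_\bullet}(L)$ completes the identification.

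The chief subtlety is the base case, which requires the small curve computation above. The inductive step is essentially formal once one verifies that $L|_{Y_1}$ remains very ample and that $E_2|_{Y_1}, \ldots, E_n|_{Y_1}$ cut out an admissible flag on $Y_1$, both of which follow directly from the construction of the flag via general members of $|L|$.
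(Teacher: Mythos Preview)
Your proof is correct and follows essentially the same route as the paper: induction on $\dim X$, with Proposition~\ref{slices} and homogeneity reducing the slice at $x_1=t$ to $(1-t)\Delta_{Y_\bullet|_{Y_1}}(L|_{Y_1})$, then invoking the inductive hypothesis on $Y_1$. The only differences are cosmetic: the paper handles the base case by citing \cite[Example~1.13]{LM08} rather than your Riemann--Roch counting argument, and it leaves the identity $(L|_{Y_1})^{n-1}=(L^n)$ implicit.
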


By Bertini's theorem, a general choice of sections $E_i\in |L|$ satisfies the hypothesis of Proposition~\ref{prop:very ample},
 so this yields a special case of Theorem~\ref{thm:main_intro}.

\begin{proof}
We proceed by induction on dimension of $X$. 
Because $Y_1\sim L$, it follows from the construction of the Okounkov body that $\Delta_{Y_{\bullet}}(L)\subseteq [0,1]\times\mathbb{R}^{n-1}$.  Since $L-tY_1$ is ample for all $0\leq t<1$, and $\mathbf{B}_+(L)=\varnothing$, we have
\[
 \Delta_{Y_\bullet}(L)|_{x_1=t} \equ \Delta_{{Y_\bullet}|_{Y_1}}((L-tY_1)|_{Y_1}) = (1-t)\cdot \Delta_{{Y_\bullet}|_{Y_1}}(L|_{Y_1}),
\]
for $0\leq t<1$; the first equality follows from Proposition~\ref{slices}, and the second from the homogeneity property of Okounkov bodies \cite[Proposition~4.1, Remark~4.15]{LM08}. Since $\Delta_{Y_\bullet}(L)$ is a closed convex body, we obtain
\[
 \Delta_{Y_\bullet}(L)|_{x_1=t} \equ (1-t)\cdot \Delta_{{Y_\bullet}|_{Y_1}}(L|_{Y_1})
\]
for all $0\leq t\leq 1$. 

To invoke the induction hypothesis, we need to verify that $L|_{Y_1}$ and the flag $Y_\bullet|_{Y_1}$ satisfy the hypotheses of the proposition.  This is easily done, however, since $L|_{Y_1}$ remains very ample, 
\[
 E_2|_{E_1}\, ,\, E_3|_{E_1}\, ,\dots ,\, E_n|_{E_1}
\]
are sections of $L|_{Y_1}$, and $Y_\bullet|_{Y_1}$ remains admissible. 

We have reduced to the case of a curve.  Assume now that $\dim X=1$, $L$ is a very ample line bundle on $X$.  Let $Y_\bullet$ be the flag $X\supset E_n$, where $E_n$ is a point
on $X$, and $L$ is numerically equivalent to $c\cdot E_n$, where $c=\deg L$ is a positive integer.  By \cite[Example~1.13]{LM08}, 
\[
 \Delta_{Y_\bullet}(L) \equ [0,c] ,
\]
which finishes the proof. 
\end{proof}

Since a sufficiently large multiple of an ample divisor is very ample, Proposition~\ref{prop:very ample}, 
together with the homogeneity of Okounkov bodies, implies the case of Theorem~\ref{thm:main_intro} where $L$ is ample:

\begin{cor}
Let $X$ be an irreducible projective variety, $L$ an ample divisor on $X$.  Then there exists an admissible flag on $X$ with respect to which the Okounkov body of $L$ 
is a rational simplex. 
\end{cor}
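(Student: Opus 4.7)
The plan is to reduce directly to the very ample case established in Proposition~\ref{prop:very ample}. Since $L$ is ample, choose a positive integer $m$ such that $mL$ is very ample. By Bertini's theorem, a general choice of sections $E_1,\dots,E_n \in |mL|$ cuts out an admissible flag $Y_\bullet$ on $X$ via $Y_k = E_1 \cap \cdots \cap E_k$ for $1 \leq k \leq n-1$, with $Y_n$ taken to be a point of $E_1 \cap \cdots \cap E_n$ at which each $Y_i$ is nonsingular. Note that admissibility is a purely geometric condition on the subvarieties $Y_i$, independent of the divisor whose Okounkov body we compute, so the same flag $Y_\bullet$ will be used for both $mL$ and $L$.

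Applying Proposition~\ref{prop:very ample} to $mL$ with this flag, the Okounkov body $\Delta_{Y_\bullet}(mL)$ is the rational simplex with vertices $(0,\dots,0)$, $(1,0,\dots,0)$, $\dots$, $(0,\dots,0,m^n(L^n))$. To transfer this back to $L$, invoke the homogeneity of Okounkov bodies (\cite[Proposition~4.1, Remark~4.15]{LM08}), which yields
\[
\Delta_{Y_\bullet}(L) \equ \frac{1}{m}\cdot \Delta_{Y_\bullet}(mL).
\]
Scaling a rational simplex by the positive rational number $1/m$ produces another rational simplex with vertices $(0,\dots,0)$, $(1/m,0,\dots,0)$, $\dots$, $(0,\dots,0, m^{n-1}(L^n))$, which proves the corollary.

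The argument is a short combination of Proposition~\ref{prop:very ample} with the scaling invariance of Okounkov bodies, so no real obstacle arises. The only point that deserves explicit mention is that the vertices produced by this scaling are automatically rational, since both $1/m$ and $m^n(L^n)$ lie in $\QQ$; consequently the divisibility hypothesis on $m$ plays no role beyond guaranteeing very ampleness of $mL$.
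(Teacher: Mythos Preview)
Your argument is correct and is exactly the approach indicated in the paper: pass to a very ample multiple $mL$, apply Proposition~\ref{prop:very ample}, and use homogeneity of Okounkov bodies to rescale by $1/m$. The paper gives no further details beyond this sentence, so your write-up in fact supplies more than the original.
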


\begin{rmk}
In the ample case, there is an alternative argument that does not rely on Proposition~\ref{slices}.  While finishing this article, we learned that H.~Sepp\"anen has recently also found this more direct proof, which appears in \cite{S12}\footnote{The result does not appear in the first two versions of the cited preprint, and was first posted on June 6, 2012.}.  The argument we give here allows approximation of Okounkov bodies (see, e.g., the proof of Proposition~\ref{prop:restriction for big and globally generated}), a technique which may have wider applications.
\end{rmk}

Our next aim is to prove a generalization to the globally generated case. 

\begin{thm}\label{thm:big and globally generated}
Let $X$ be an irreducible projective variety, with a big and globally generated divisor $L$.  There exists an admissible flag $Y_\bullet$ on $X$ with respect to which $\Delta_{Y_\bullet}(L)$ is a rational simplex. 
\end{thm}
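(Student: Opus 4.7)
The plan is to adapt the inductive argument of Proposition~\ref{prop:very ample} to the big and globally generated setting. The flag is again constructed from general sections of $|L|$ (after passing to a multiple if necessary), but the slicing step must be modified: Proposition~\ref{slices} requires $L - tY_1$ to be ample, while in our situation this class is only big and globally generated.

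\textbf{Construction of the flag.} After replacing $L$ by a sufficiently divisible multiple (using the homogeneity of Okounkov bodies, \cite[Proposition~4.1]{LM08}), we may assume that the morphism $\phi \colon X \to Y \subseteq \PP^N$ defined by $|L|$ is birational onto its image, with $L = \phi^{*}\OO_Y(1)$. Choose general sections $E_1,\ldots,E_n \in |L|$ arising as pullbacks of general hyperplanes in $\PP^N$. A Bertini argument on $Y$, combined with the fact that $\phi$ is an isomorphism over the complement of a proper closed subset, shows that the successive intersections $Y_k := E_1 \cap \cdots \cap E_k$ are irreducible of codimension $k$ for $1 \leq k \leq n-1$, with $Y_{n-1}$ nonsingular at a general point; $Y_n$ is then taken to be such a smooth point of $Y_{n-1} \cap E_n$, yielding an admissible flag $Y_{\bullet}$ on $X$.

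\textbf{Slicing via ample perturbation.} The key identity to establish is
\[
  \Delta_{Y_{\bullet}}(L) \cap \bigl( \{t\} \times \R^{n-1} \bigr) \equ (1-t) \cdot \Delta_{Y_{\bullet}|_{Y_1}}(L|_{Y_1})
\]
for every $t \in [0,1]$. Fix an ample divisor $H$ on $X$. For rational $\epsilon > 0$, $L + \epsilon H$ is ample (since $L$ is nef, being globally generated), and for $t \in [0,1)$ the class $L + \epsilon H - tY_1 \sim_{\R} (1-t)L + \epsilon H$ is also ample. Proposition~\ref{slices} applied to $L + \epsilon H$ therefore yields
\[
  \Delta_{Y_{\bullet}}(L + \epsilon H) \cap \bigl( \{t\} \times \R^{n-1} \bigr) \equ \Delta_{Y_{\bullet}|_{Y_1}}\bigl( (1-t)L|_{Y_1} + \epsilon H|_{Y_1} \bigr).
\]
Passing to the limit $\epsilon \to 0^{+}$ using the continuity of the Okounkov body on the big cone (\cite{LM08}), together with homogeneity to rewrite $\Delta_{Y_{\bullet}|_{Y_1}}((1-t)L|_{Y_1}) = (1-t)\Delta_{Y_{\bullet}|_{Y_1}}(L|_{Y_1})$, one obtains the desired equality for $t \in [0,1)$; convexity of $\Delta_{Y_{\bullet}}(L)$ extends it to $t = 1$.

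\textbf{Induction and conclusion.} The restriction $L|_{Y_1}$ is again big and globally generated, and $Y_{\bullet}|_{Y_1}$ is obtained from the same construction applied to $L|_{Y_1}$; by the inductive hypothesis, $\Delta_{Y_{\bullet}|_{Y_1}}(L|_{Y_1})$ is a rational simplex. The base case $n = 1$ is immediate: $\Delta_{Y_{\bullet}}(L) = [0, \deg L]$ by \cite[Example~1.13]{LM08}. The slicing identity then exhibits $\Delta_{Y_{\bullet}}(L)$ as the cone over the rational simplex $\Delta_{Y_{\bullet}|_{Y_1}}(L|_{Y_1})$ with apex at $(1,0,\ldots,0)$, hence itself a rational simplex.

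\textbf{Main obstacle.} The principal difficulty is the slicing step: since Proposition~\ref{slices} is not directly available for $L$, one must perturb by an ample class, apply the slicing proposition to the ample perturbation, and pass to the limit using continuity of Okounkov bodies on the big cone. A secondary technicality is the Bertini-type verification that the successive intersections $Y_k$ remain irreducible of the correct codimension in the presence of the (possibly nonempty) exceptional locus of $\phi$.
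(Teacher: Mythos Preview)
Your proposal is correct and follows essentially the same approach as the paper: construct the flag from general members of $|L|$ via Bertini, establish the slicing identity by perturbing $L$ with an ample class so that Proposition~\ref{slices} applies, and then pass to the limit. The paper packages the perturbation-and-limit step separately as Lemma~\ref{lem:nested} and Proposition~\ref{prop:restriction for big and globally generated}, making explicit the nestedness $\Delta_{Y_\bullet}(L)\subseteq\Delta_{Y_\bullet}(L+\epsilon H)$ and the intersection identity $\bigcap_\epsilon \Delta_{Y_\bullet}(L+\epsilon H)=\Delta_{Y_\bullet}(L)$, which is what actually justifies taking the limit of slices (Hausdorff continuity alone would not suffice); your appeal to ``continuity on the big cone'' should be read in this sense.
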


The main idea of the proof is to choose an arbitrary but sufficiently positive divisor $A$ on $X$, and approximate $\Delta_{Y_\bullet}(L)$ by the Okounkov bodies 
$\Delta_{Y_\bullet}(\frac{1}{m}A+L)$ as $m$ tends to infinity.  To accomplish this, we will need some technical preliminaries.

\begin{lem}\label{lem:nested}
Let $L$ and $D$ be divisors on an irreducible projective variety, with $L$ big and $D$ semiample.  Let $Y_\bullet$ an admissible flag.  Then, for integers $m_2\geq m_1 >0$, we have
\begin{align}
 \Delta_{Y_\bullet}(\frac{1}{m_2}D+L) &\dsubseteq \Delta_{Y_\bullet}(\frac{1}{m_1}D+L), \label{eq1} \\ 
\intertext{and}
 \bigcap_{m=1}^{\infty}\Delta_{Y_\bullet}(\frac{1}{m}D+L) &\equ \Delta_{Y_\bullet}(L) .\label{eq2}
\end{align}
\end{lem}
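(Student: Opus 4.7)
The plan is to prove the nested inclusion \eqref{eq1} by a multiplication-of-sections argument that exploits the semiampleness of $D$, and then to deduce \eqref{eq2} from \eqref{eq1} together with the continuity of Okounkov bodies on the big cone.

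For \eqref{eq1}, I write
\[
  \frac{1}{m_1}D + L \;=\; \Big(\frac{1}{m_2}D+L\Big) \,+\, \frac{m_2-m_1}{m_1\,m_2}\,D ,
\]
so the ``difference'' $E := \frac{m_2-m_1}{m_1 m_2}\,D$ is a non-negative rational multiple of $D$ and, in particular, still semiample. Choose $N\gg 0$ sufficiently divisible so that $NE$, $N(\frac{1}{m_1}D+L)$, and $N(\frac{1}{m_2}D+L)$ are all integral Cartier divisors and the linear system $|NE|$ is base-point free. Since $Y_n$ is not a base point of $|NE|$, there exists a section $t\in H^0(X,NE)$ with $t(Y_n)\neq 0$, whence $\nu_{Y_\bullet}(t)=(0,\ldots,0)$. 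For every $p\geq 1$, multiplication by $t^{p}$ then yields a valuation-preserving injection
\[
  H^0\big(X,\, pN(\tfrac{1}{m_2}D+L)\big) \;\hookrightarrow\; H^0\big(X,\, pN(\tfrac{1}{m_1}D+L)\big) ,
\]
and translating this into an inclusion of graded semigroups $\Gamma_{Y_\bullet}$ and then passing to closed convex hulls gives \eqref{eq1}.

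For \eqref{eq2}, the same multiplication trick applied to $L$ and $\frac{1}{m}D$ produces $\Delta_{Y_\bullet}(L)\subseteq \Delta_{Y_\bullet}(\frac{1}{m}D+L)$ for every $m$, providing the inclusion $\Delta_{Y_\bullet}(L)\subseteq \bigcap_m\Delta_{Y_\bullet}(\frac{1}{m}D+L)$. For the reverse inclusion, since $L$ lies in the interior of the big cone, the classes of $\tfrac{1}{m}D+L$ converge to the class of $L$ in that interior, so by the continuity of Okounkov bodies on the big cone \cite{LM08} the sets $\Delta_{Y_\bullet}(\tfrac{1}{m}D+L)$ converge to $\Delta_{Y_\bullet}(L)$ in the Hausdorff metric. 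Combined with the fact, established in \eqref{eq1}, that these bodies form a decreasing nested sequence of compact convex subsets of $\R^{n}$, it follows that their intersection coincides with this Hausdorff limit, namely $\Delta_{Y_\bullet}(L)$.

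The main technical point to attend to is the rational/$\R$-divisor bookkeeping: one must pass to a sufficiently divisible multiple $N$ to make sense of the relevant linear series and semigroups, and then invoke the homogeneity property of Okounkov bodies from \cite{LM08} to transfer the inclusions back to the original rational divisors. The only mild subtlety in choosing the section $t$ with $\nu_{Y_\bullet}(t)=0$ is that $t$ must be nonvanishing at the flag point $Y_n$, which is guaranteed by the base-point freeness of $|NE|$. Everything else is formal once the basic multiplication map is in place; I do not expect either step to require any deeper input than this.
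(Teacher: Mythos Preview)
Your argument is correct and is essentially the same as the paper's, just carried out explicitly at the level of sections rather than by invoking the global Okounkov cone. The paper uses the superadditivity $\Delta_{Y_\bullet}(\tfrac{1}{m_2}D+L)+\Delta_{Y_\bullet}\big((\tfrac{1}{m_1}-\tfrac{1}{m_2})D\big)\subseteq\Delta_{Y_\bullet}(\tfrac{1}{m_1}D+L)$ coming from the global cone, together with the observation that the second summand contains the origin when $D$ is base-point free; your multiplication by a section $t$ with $\nu(t)=0$ is exactly the content of that observation, and the continuity step for \eqref{eq2} is identical in both proofs.
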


\begin{proof}
Making use of the homogeneity property for Okounkov bodies, we can replace $D$ and $L$ by suitable multiples, and thereby assume that $D$ is base point free. In this case, the existence of global Okounkov cone  implies the inclusion
\[
\Delta_{Y_{\bullet}}(\frac{1}{m_2}D+L)+\Delta_{Y_{\bullet}}((\frac{1}{m_1}-\frac{1}{m_2})D)\ \subseteq \ \Delta_{Y_{\bullet}}(\frac{1}{m_1}D+L).
\]
(See Theorem~4.5 and the proof of Corollary~4.12 in \cite{LM08}.)  
Since $D$ is base point free, the body $\Delta_{Y_{\bullet}}((\frac{1}{m_1}-\frac{1}{m_2})D)$ contains the origin and thus the inclusion in \eqref{eq1} follows.  The equality in \eqref{eq2} is an immediate consequence of the inclusion in \eqref{eq1}, together with the continuity property arising from the existence of the global Okounkov cones.
\end{proof}

\begin{prop}\label{prop:restriction for big and globally generated}
Let $L$ be a big and globally generated divisor on $X$, $Y_\bullet$ an admissible flag with the property that $L\sim Y_1$. Then
\[
\Delta_{Y_\bullet}(L)|_{x_1=t} \equ  (1-t)\Delta_{Y_\bullet|_{Y_1}}(L|_{Y_1}) 
\]
for all $0\leq t\leq 1$. 
\end{prop}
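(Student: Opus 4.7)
The plan is to reduce to the ample case (Proposition~\ref{slices}) by perturbing $L$ with a small multiple of an auxiliary ample divisor. Fix any ample Cartier divisor $A$ on $X$. Since $L$ is globally generated and hence nef, the divisor $A+mL$ is ample for each $m\geq 1$, its augmented base locus is empty, and $(A+mL) - sY_1 \sim A + (m-s)L$ remains ample for all $0\leq s\leq m$. Proposition~\ref{slices} therefore gives
\[
\Delta_{Y_\bullet}(A+mL)\bigm|_{x_1=s} \equ \Delta_{Y_\bullet|_{Y_1}}\bigl((A+(m-s)L)|_{Y_1}\bigr)
\]
for every such $s$.

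I would then pass from $A+mL$ to $\frac{1}{m}A+L$ via homogeneity: $\Delta_{Y_\bullet}(A+mL) = m\cdot\Delta_{Y_\bullet}(\frac{1}{m}A+L)$, and likewise on $Y_1$. Tracking how this scaling acts on the first coordinate (a point at height $mt$ in $\Delta_{Y_\bullet}(A+mL)$ corresponds, after division by $m$, to a point at height $t$ in $\Delta_{Y_\bullet}(\frac{1}{m}A+L)$) and setting $s=mt$, the displayed identity becomes
\[
\Delta_{Y_\bullet}\bigl(\tfrac{1}{m}A+L\bigr)\bigm|_{x_1=t} \equ \Delta_{Y_\bullet|_{Y_1}}\bigl((\tfrac{1}{m}A + (1-t)L)|_{Y_1}\bigr)
\]
for all $0\leq t\leq 1$.

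The last step is to let $m\to\infty$. By Lemma~\ref{lem:nested} applied on $X$, the bodies $\Delta_{Y_\bullet}(\tfrac{1}{m}A+L)$ form a nested decreasing family with intersection $\Delta_{Y_\bullet}(L)$; since slicing by a hyperplane commutes with intersection, the left-hand side tends to $\Delta_{Y_\bullet}(L)|_{x_1=t}$. On the right, Lemma~\ref{lem:nested} applied on $Y_1$---with $(1-t)L|_{Y_1}$ in the role of the big divisor and $A|_{Y_1}$ (ample, hence semiample) in the role of $D$---identifies the intersection as $\Delta_{Y_\bullet|_{Y_1}}((1-t)L|_{Y_1}) = (1-t)\Delta_{Y_\bullet|_{Y_1}}(L|_{Y_1})$, for $0\leq t<1$. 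The boundary case $t=1$ then follows from closedness of the Okounkov body.

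The principal obstacle is ensuring that the second application of Lemma~\ref{lem:nested} is legitimate, i.e.\ that $L|_{Y_1}$ is big: global generation does not force bigness of $L|_{Y_1}$ for an arbitrary $Y_1\in|L|$, since the morphism $\varphi_L$ could in principle contract $Y_1$. For the flags that matter in the intended application, however, $Y_1$ will be (or can be arranged to be) a general member of $|L|$, which guarantees that $\varphi_L|_{Y_1}$ is generically finite onto its image and hence that $L|_{Y_1}$ is big. The remaining bookkeeping---the precise scaling relation between slices of $\Delta(A+mL)$ and $\Delta(\tfrac{1}{m}A+L)$, and the verification that slicing commutes with nested intersection of closed convex bodies---is routine.
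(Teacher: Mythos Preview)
Your proposal is correct and follows essentially the same approach as the paper: perturb $L$ by $\tfrac{1}{m}A$ with $A$ ample, apply Proposition~\ref{slices} to the ample divisor $A+mL$, rescale via homogeneity, and pass to the limit using Lemma~\ref{lem:nested} on both $X$ and $Y_1$. You are in fact more careful than the paper on one point: you correctly flag that the application of Lemma~\ref{lem:nested} on $Y_1$ requires $L|_{Y_1}$ to be big, a hypothesis the paper's statement and proof leave implicit but which is indeed satisfied for the general flags used in Theorem~\ref{thm:big and globally generated}.
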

\begin{proof}
It is enough to prove the equality for $t\in [0,1)$, since the case $t=1$ follows from the fact that $\Delta_{Y_{\bullet}}(L)$ is a closed convex set. We first claim that if $A$ is an ample divisor on $X$, then
\[
 \Delta_{Y_\bullet|_{Y_1}}((\frac{1}{m}A+L-tY_1)|_{Y_1}) \equ \Delta_{Y_\bullet}(\frac{1}{m}A+L)|_{x_1=t}
\]
for all positive integers $m$ and all $t\in [0,1)$. Granting this, the proposition follows from the equality \eqref{eq2} of Lemma~\ref{lem:nested} and the homogeneity property of Okounkov bodies.

To prove the claim, after scaling we arrive at the equivalent statement
\[
 \Delta_{Y_\bullet|_{Y_1}}((A+m(L-tY_1))|_{Y_1}) \equ \Delta_{Y_\bullet}(A+mL)|_{x_1=t}\ .
\]
For any $m\in\mathbf{N}$ define $D_m:=A+mL$. Then for any $m\in\mathbf{N}$, $D_m-tY_1$ is ample for all $t\in[0,1)$, and $\mathbf{B}_+(D_m)=\varnothing$, since $D_m$ is also ample.  The displayed equality now follows directly from Proposition~\ref{slices} applied to the divisor $D_m$.
\end{proof}

\begin{proof}[Proof of Theorem~\ref{thm:big and globally generated}]
Let $E_1,\ldots ,E_{n-1}\in |L|$ be general elements of the linear series. Using induction and applying the Bertini theorem for big and globally divisors at each step (see \cite[Theorem 3.3.1]{PAG}), the flag 
\[
 Y_1 \equ E_1\, ,\, \dots \, ,\, Y_{n-1} \equ E_1\cap\dots\cap E_{n-1}\, ,\, Y_n \equ \st{x}
\]
is admissible, where $x\in X$ is chosen to be a smooth point of the curve $Y_{n-1}$.  As  before, by using Proposition~\ref{prop:restriction for big and globally generated} we reduce the question to dimension one less. We observe that the conditions of the Theorem are fulfilled for the restrictions to $Y_1$, hence induction on $\dim X$ takes us back to the case of curves. On a curve, however, the Okounkov body of a complete linear series is a line segment with rational endpoints. 
\end{proof}

Now we can give the proof of the main theorem, where $L$ is a big divisor with finitely generated section ring.  For this, we require $X$ to be normal.

\begin{proof}[Proof of Theorem~\ref{thm:main_intro}]
The main idea is to approximate some power of $L$ with a globally generated divisor on a modification of $X$.  Let $R(X,L) = \bigoplus_{m\geq 0} H^0(X,mL)$ be the section ring of $L$.  Following \cite[Example 2.1.31]{PAG}, if $X$ is normal and $R(X,L)$ is generated in degree $p>0$, one can construct a proper birational morphism $f\colon X'\to X$ from a normal projective variety, and an effective divisor $N$ on $X'$ with the following properties:
\renewcommand{\theenumi}{\alph{enumi}}
\begin{enumerate}
\item the divisor $D:=f^*(pL)-N$ is big and globally generated, and \label{p1}

\smallskip

\item for all $m\geq 0$, $H^0(X', \OO_{X'}(mD))  =  H^0(X,\OO_X(mpL))$, the identification being given by $f_*$. \label{p2}
\end{enumerate}
\renewcommand{\theenumi}{\arabic{enumi}}
This tells us that outside the support of $N$ and the exceptional locus of $f$, the zero-locus of a global section from $|mD|$ is isomorphic to the zero-locus of a global section from $|mpL|$, and vice versa. 

With this in hand, construct an admissible flag $Z_{\bullet}$ on $X'$ as in the proof of Theorem~\ref{thm:big and globally generated}, using general sections of $D$ with the additional condition that $Z_n \notin\textup{Supp}(N)\cup \textup{Exc}(f)$. Take $Y_\bullet$ be the 
image of $Z_\bullet$ under $f$.  By property \eqref{p2} above, the Okounkov bodies $\Delta_{Z_{\bullet}}(D)$ and $\Delta_{Y_{\bullet}}(L)$ are equal.  Since $\Delta_{Z_\bullet}(D)$ is a rational simplex by Theorem~\ref{thm:big and globally generated}, we are done.
\end{proof}
\begin{rmk}
An interesting consequence of \cite[Example 2.1.31]{PAG} is that if $D$ is a Cartier divisor on an $n$-dimensional normal projective variety whose section ring is generated in degree $d$, then the denominator of $\vol{X}{D}$ divides $d^n$.  We have not seen this stated in the literature.

In particular, if $D$ is a divisor with $\vol{X}{D}=1/p$ where $p$ is a prime number, 
then $R(X,D)$ cannot be generated in degree less than $p$.  
\end{rmk}

\medskip

When $X$ is a surface, we can say more.

\begin{prop}\label{prop:surface}
Let $X$ be a smooth projective surface and $L$ be a big and nef line bundle on $X$.  Then there exists an irreducible curve $Y_1\subseteq X$ such that the Okounkov body $\Delta_{Y_\bullet}(L)$ is a rational simplex, where $Y_2$ is a general point on the curve $Y_1$.
\end{prop}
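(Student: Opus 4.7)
The plan is to use the description of Okounkov bodies on smooth surfaces via Zariski decomposition (as in \cite{LM08}), combined with the approximation-by-ample-divisors technique from the proof of Theorem~\ref{thm:big and globally generated}. The key idea is to choose $Y_1$ to be an irreducible curve numerically proportional to $L$, so that the Zariski decomposition of $L - tY_1$ stays trivial throughout the relevant range.

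First I would argue that for $k$ sufficiently large the complete linear system $|kL|$ contains an irreducible element $Y_1 \sim kL$: since $L$ is big, $\phi_{|kL|}$ is birational onto its image for large $k$, and provided $|kL|$ has no fixed curves, a general member is irreducible by Bertini applied to the image. Pick such $Y_1$, let $Y_2$ be a very general point of $Y_1$, and observe that $Y_1 \not\subseteq \mathbf{B}_+(L)$ because $L \cdot Y_1 \equ kL^2 > 0$. Fix any ample divisor $A$ on $X$ and set $L_m := L + \tfrac{1}{m}A$. Each $L_m$ is ample, and for $t \in [0, 1/k]$ the divisor $L_m - tY_1 \sim (1-tk)L + \tfrac{1}{m}A$ is nef plus ample, hence ample. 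Proposition~\ref{slices} then identifies the slice of $\Delta_{Y_\bullet}(L_m)$ at $x_1 \equ t$ with $\Delta_{Y_\bullet|_{Y_1}}((L_m - tY_1)|_{Y_1})$, which equals the interval $\bigl[0,\, (1-tk)\,kL^2 + \tfrac{k}{m}\,L\cdot A\bigr]$ since the Okounkov body of a line bundle on a curve is $[0, \deg]$.

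Letting $m \to \infty$ and applying Lemma~\ref{lem:nested} yields
\[
\Delta_{Y_\bullet}(L) \cap \bigl(\{t\} \times \R\bigr) \equ \bigl[0,\, (1-tk)\,kL^2\bigr] \quad \text{for } t \in [0, 1/k].
\]
For $t > 1/k$ the class $L - tY_1 \sim (1-tk)L$ is a negative multiple of the big divisor $L$, hence not pseudo-effective, so the corresponding slices are empty. Therefore $\Delta_{Y_\bullet}(L)$ is the triangle with vertices $(0,0)$, $(1/k, 0)$, and $(0, kL^2)$, a rational simplex.

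The main obstacle is ensuring the existence of an irreducible element of $|kL|$. For $L$ big and nef on a smooth surface this reduces to showing that $|kL|$ is fixed-curve-free for some $k$; since $L$ is nef the asymptotic vanishing orders $\sigma_C(L) \equ \lim_k \tfrac{1}{k}\ord_C|kL|$ vanish along every prime divisor $C$, and together with the subadditivity of $\ord_C$ one expects this condition to hold for arbitrarily large $k$. If pinning this down directly proves awkward, one can instead resolve the base locus of $|kL|$ via a birational modification $\pi\colon X' \to X$, carry out the analogous computation on $X'$ for $\pi^*L$ and a curve $C' \in |M|$ in the base-point-free movable part, and push the resulting flag down to $X$ using that $H^0(X', m\pi^*L) \equ H^0(X, mL)$.
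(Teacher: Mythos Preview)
Your argument is correct when $L$ is semi-ample (and indeed the paper dispatches that case immediately via Theorem~\ref{thm:big and globally generated}), but the content of the proposition lies in the non-semi-ample case, and there your approach has a genuine gap. You need an irreducible $Y_1 \in |kL|$, and you hope to obtain one by showing $|kL|$ is fixed-curve-free for some $k$. This fails: if $L$ is big, nef, and not semi-ample, then $\mathbf{B}(L)$ must contain a curve (were $\mathbf{B}(L)$ finite, Zariski--Fujita \cite[Remark~2.1.32]{PAG} would force $L$ to be semi-ample), so every $|kL|$ has a fixed component and no irreducible member. Your observation that $\sigma_C(L)=0$ for nef $L$ only gives $\ord_C|kL| = o(k)$, not $\ord_C|kL|=0$ for some $k$; subadditivity does not close this gap. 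Zariski's classical example of a big and nef divisor with non-finitely-generated section ring is precisely of this type.

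The paper's fix is to take $Y_1$ not in $|kL|$ but in its free part: writing $\mathbf{B}(L) = C_1 \cup \cdots \cup C_r$, Zariski--Fujita produces $m, a_i > 0$ with $mL - \sum a_i C_i$ base-point-free, and $Y_1$ is chosen as a general irreducible member of that system. Now $Y_1$ is no longer proportional to $L$, so your slicing-by-ample-approximation trick no longer reduces everything to a scalar multiple of $L$; one must analyze $L - tY_1 \sim (1-mt)L + t\sum a_i C_i$ directly. The key point your fallback sketch is missing is that each $C_i$ lies in $\mathbf{B}_+(L) = \textup{Null}(L)$, so $(L \cdot C_i) = 0$; this both pins down $\mu_{Y_1}(L) = 1/m$ and shows that the displayed expression \emph{is} the Zariski decomposition of $L - tY_1$ for all $t \in [0,1/m]$. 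With $Y_2$ chosen off $\mathbf{B}(L)$, the surface description \cite[Theorem~6.4]{LM08} then gives the triangle directly. (Your blow-up fallback is a detour---on a surface there is nothing to resolve beyond subtracting the fixed curves---and carrying it through leads back to exactly this computation.)
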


\begin{proof}
If $L$ is semi-ample, this follows from Theorem~\ref{thm:big and globally generated}.  So, we can assume that $L$ is not semi-ample, i.e. the stable base locus $\textbf{B}(L)\neq 0$. Let $C_1,\ldots ,C_r$ be the only irreducible curves contained in $\textbf{B}(L)$.  Then there exists positive integers $m,a_1,\ldots ,a_r\in\mathbf{Z}$ such that the base locus of the divisor $mL-\sum_{i=1}^{i=r}a_iC_i$ is a finite set.  By Zariski-Fujita theorem \cite[Remark 2.1.32]{PAG} this divisor is semi-ample.  Consequently, taking large enough multiples, we can assume that $mL-\sum_{i=1}^{i=r}a_iC_i$ is a base point-free divisor and $\textbf{B}(L)=C_1\cup\ldots \cup C_r$.  Taking this into account we choose $Y_1\in |mL-\sum_{i=1}^{i=r}a_iC_i|$ to be an irreducible curve.

Furthermore we know that $(L.C_i)=0$ for all $i=1,\ldots ,r$. This follows from the fact that $\mathbf{B}(L)\subseteq\mathbf{B}_+(L)=\textup{Null}(L)$, where the inclusion follows from \cite[Definition 10.3.2]{PAG} and the equality from \cite[Thereom 10.3.5]{PAG}. Since $(L^2)>0$, as $L$ is big and nef, then we also know that the matrix $||(C_i.C_j)||_{i,j}$ is negative definite and the divisor $\sum_{i=1}^{i=r}b_iC_i$, for any $b_i\in\mathbb{R}_+$, is pseudo-effective but not big. 

For the proof we consider the family of divisors $D_t = L-tY_1$ for all $t\geq 0$. The claim is that $D_t$ is effective iff $0\leq t\leq 1/m$. For this notice that 
\[
 D_t \equ L-tY_1 \sim L-t(mL-\sum_{i=1}^{r}a_iC_i) \equ (1-mt)L + t\cdot\sum_{i=1}^{r}a_iC_i\ .
\]
By this description $D_t=L-tY_1$ is effective for $0\leq t\leq 1/m$. If $t>1/m$, set $\epsilon=t-1/m$. Since $(L\cdot C_i)=0$ for all $i=1,\ldots , r$, and $(L^2)>0$, then the intersection number
\[
(D_t\cdot L) \equ ( (-\epsilon L+ (1+\epsilon)\sum_{i=1}^{r}a_iC_i)\cdot L) \equ -\epsilon(L^2) < 0\ .
\]
 Taking into account that $L$ is nef, then this implies that $D_t$ is not effective when $t>1/m$.

 Above, we have seen that $D_t\sim (1-mt)L + t\cdot\sum_{i=1}^{r}a_iC_i$. Thus the divisors $P_t \equ (1-mt)L$ and $N_t \equ t\sum_{i=1}^{r}a_iC_i$ form the Zariski decomposition of $D_t$ for all $0\leq t\leq 1/m$. This follows since $D_t = P_t+N_t$, $(P_t\cdot N_t)=0$, $P_t$ is nef (since $L$ is), $N_t$ is a negative definite cycle, and the uniqueness of Zariski decompositions. This and the claim above say that the divisor  $D_t=L-tY_1$ stays in the same Zariski chamber for all $0\leq t \leq 1/m$. 

Now, going back to the Okounkov body, we know, by \cite[Theorem 6.4]{LM08}, that if we choose a smooth point $x\in Y_1$, then we have the following description
\[
\Delta_{(Y_1 ,x)}(L) \ = \ \{ (t,y)\in\R^2 \ | \ a\leq t\leq \mu_{Y_1}(L), \textup{ and } \alpha (t)\leq t\leq \beta (t) \ \}
\]
The choice of $Y_1$ forces $a=0$ and the claim implies that $\mu_{Y_1}(L)=1/m$. The proof of \cite[Theorem 6.4]{LM08} says that $\alpha(t)=\textup{ord}_x(N_t|_{Y_1})$ and $\beta(t)=\textup{ord}_x(N_t|_{Y_1})+(Y_1 .P_t)$. Thus by choosing $x\notin \textbf{B}(L)$, which is always possible since the curve $Y_1$ moves in base point free linear series, we obtain $\alpha(t)=0$ and $\beta(t)=(1-mt)(Y_1 .L)$. Thus $\Delta_{Y_\bullet}(L)$ is a simplex. 
\end{proof}

\begin{rmk}
To our knowledge, this is the first case where the rational polyhedrality of the Okounkov body of  a big but non-finitely generated divisor is established.  The above proof is very surface-specific; it is an open question whether big and nef (but non-finitely generated) divisors can have rational polytopes as Okounkov bodies if the dimension of the underlying variety is at least three.
\end{rmk}

\begin{cor}\label{cor:surface}
Let $X$ be a smooth projective surface, $D$ a big divisor on $X$. Then there exists an admissible flag $Y_\bullet$ with respect to which
$\Delta_{Y_\bullet}(L)$ is a rational simplex. 
\end{cor}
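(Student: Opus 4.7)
The plan is to reduce to the big-and-nef case handled by Proposition~\ref{prop:surface} via the Zariski decomposition.  Since $X$ is a smooth projective surface and $D$ is big, $D$ admits a unique Zariski decomposition $D = P + N$, where $P$ is a nef (and still big, since $(P^2)=\vol{X}{D}>0$) $\Q$-divisor, $N = \sum_i a_i C_i$ is an effective $\Q$-divisor with negative-definite support, and $P \cdot C_i = 0$ for each component $C_i$ of $N$.  The coefficients $a_i$, and hence the whole decomposition, are rational because they solve the linear system determined by the intersection matrix of the $C_i$.

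Apply Proposition~\ref{prop:surface} to $P$ (clearing denominators and using homogeneity of Okounkov bodies as needed) to produce an admissible flag $Y_\bullet = (Y_1,Y_2)$, with $Y_1$ a moving irreducible curve and $Y_2$ a general point on it, such that $\Delta_{Y_\bullet}(P)$ is a rational simplex.  I then claim $\Delta_{Y_\bullet}(D) = \Delta_{Y_\bullet}(P)$, which finishes the proof.  To justify the claim, recall that for every $m$ with $mN$ integral, each section of $\OO_X(mD)$ vanishes to order at least $m a_i$ along each $C_i$, so multiplication by the canonical section $s_{mN}$ of $\OO_X(mN)$ defines an isomorphism $H^0(X,mP) \xrightarrow{\sim} H^0(X,mD)$.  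Under this identification the flag valuation is shifted by $m\cdot\nu_{Y_\bullet}(N)$.  With the flag above this shift vanishes: since $Y_1$ moves in a positive-dimensional family we have $(Y_1^2)\geq 0$, whereas every component of $N$ has strictly negative self-intersection, so $Y_1$ cannot equal any $C_i$ and $\ord_{Y_1}(N) = 0$; and a general $Y_2$ may be chosen off the finite set $\textup{Supp}(N)\cap Y_1$, killing the second coordinate as well.  Therefore $\nu_{Y_\bullet}(N) = 0$, the semigroups $\Gamma_{Y_\bullet}(D)$ and $\Gamma_{Y_\bullet}(P)$ agree, and so do the two Okounkov bodies.

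The main subtlety is verifying that the flag produced by Proposition~\ref{prop:surface} for the positive part $P$ is genuinely compatible with the negative part $N$ of $D$---in particular, that the moving curve $Y_1$ automatically avoids the rigid, negative-self-intersection components of $N$.  This is exactly what the containment $\textup{Supp}(N) \subseteq \textup{Null}(P) = \mathbf{B}_+(P)$ (via \cite[Theorem~10.3.5]{PAG}) and the Bertini-style freedom in choosing $Y_1$ and $Y_2$ from the construction in Proposition~\ref{prop:surface} afford, making the reduction clean.
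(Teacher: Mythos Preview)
Your argument is correct and follows the same overall strategy as the paper: take the Zariski decomposition $D=P+N$, apply Proposition~\ref{prop:surface} to the big and nef positive part $P$, and then relate $\Delta_{Y_\bullet}(D)$ to $\Delta_{Y_\bullet}(P)$.

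The one genuine difference is in that last step.  The paper simply invokes \cite[Corollary~2.2]{LS11}, which says that for \emph{any} admissible flag $\Delta_{Y_\bullet}(D)$ is a rational translate of $\Delta_{Y_\bullet}(P)$, and this is already enough to conclude that one is a rational simplex iff the other is.  You instead prove the comparison directly for the particular flag coming out of Proposition~\ref{prop:surface}, and in fact get the sharper conclusion $\Delta_{Y_\bullet}(D)=\Delta_{Y_\bullet}(P)$, by checking $\nu_{Y_\bullet}(N)=0$: $Y_1$ lies in a base-point-free linear series, so $(Y_1^2)\ge 0$, forcing $Y_1\neq C_i$ for each negative component $C_i$ of $N$; and a general $Y_2\in Y_1$ avoids the finite set $\mathrm{Supp}(N)\cap Y_1$.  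Your route is more self-contained (no appeal to \cite{LS11}), while the paper's is shorter and flag-independent; both are valid.
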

\begin{proof}
Let $D=P+N$ be the Zariski decomposition of $D$. By \cite[Corollary 2.2]{LS11}, $\Delta_{Y_\bullet}(D)$ is a rational translate of $\Delta_{Y_\bullet}(P)$. By Proposition \ref{prop:surface} there exists a flag $Y_\bullet$ for which $\Delta_{Y_\bullet}(P)$ is a rational simplex, but then so is $\Delta_{Y_\bullet}(D)$ for the same
flag. 
\end{proof}

\bigskip

We conclude with some remarks about the semigroup $\Gamma_{Y_\bullet}(L)$.  For simplicity, we will assume $X$ is nonsingular, and we restrict attention to the situation of Proposition~\ref{prop:very ample}, so $L$ is very ample.  First, consider the projection of $\Gamma_{Y_\bullet}(L)$ onto $\NN \times \ZZ^{n-1}$:
\[
  \Gamma' = \{ (m,\nu_1(s),\ldots,\nu_{n-1}(s)) \,|\, s\in H^0(X,mL)\setminus\{0\} \}.
\]
The proof of Proposition~\ref{prop:very ample} shows that $\Gamma'$ is finitely generated; in fact, it is simply the span of the standard basis vectors.  It is therefore natural to ask whether the flag $Y_\bullet$ can be chosen so that $\Gamma_{Y_\bullet}(L)$ itself is finitely generated.  A general answer to this question would have interesting ramifications: when $\Gamma_{Y_\bullet}(L)$ is finitely generated, $X$ admits a flat degeneration to the corresponding toric variety \cite{A10}, which in turn leads to an integrable system on $X$ \cite{HK12}.

\begin{prop}\label{prop:fg}
Let $X$ be a nonsingular projective variety of dimension $n$, and let $L$ be a very ample divisor.  
Suppose that, under the embedding $X \hookrightarrow \PP = \PP(H^0(X,L))$, there exist  linear subspaces $W_n \subseteq W_{n-1} \subseteq \PP$, of codimensions $n$ and $n-1$, respectively, such that the set-theoretic intersection $Y_n = X\cap W_n$ is a single point and the scheme-theoretic intersection $W_{n-1} \cap X$ is reduced and irreducible.  Then there is a flag $Y_\bullet$, with $Y_k$ an irreducible Cartier divisor in $Y_{k-1}$ for $1\leq k\leq n-1$, such that the semigroup $\Gamma_{Y_\bullet}(L)$ is finitely generated.
\end{prop}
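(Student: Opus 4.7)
The plan is to construct an admissible flag using linear forms that vanish on $W_{n-1}$, then reduce finite generation of $\Gamma_{Y_\bullet}(L)$ to a question on the curve $Y_{n-1}$, and finally invoke the structure of the Weierstrass semigroup at $p$. Concretely, I would choose linear forms $g_1, \ldots, g_n \in H^0(\PP, \OO(1))$ with $W_{n-1} = V(g_1, \ldots, g_{n-1})$ and $W_n = W_{n-1} \cap V(g_n)$, take a general basis $f_1, \ldots, f_{n-1}$ of $\mathrm{span}(g_1, \ldots, g_{n-1})$, set $f_n := g_n$, and define $Y_k := V(f_1, \ldots, f_k) \cap X$ for $1 \leq k \leq n-1$, with $Y_n := X \cap W_n = \{p\}$.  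By Bertini applied to the pencil of hyperplanes containing $W_{n-1}$, the $Y_k$ for $k \leq n-2$ are irreducible; $Y_{n-1}$ is irreducible by hypothesis (smoothness at $p$ can be arranged by further genericity).  Writing $C := Y_{n-1}$, the proof of Proposition~\ref{prop:very ample} yields $\Delta_{Y_\bullet}(L)$ as the rational simplex with vertices $0, e_1, \ldots, e_{n-1}, ce_n$ where $c = (L^n)$; the restrictions $\tilde f_i \in H^0(X, L)$ satisfy $\nu(\tilde f_i) = e_i$ for $i < n$, while $\mathrm{div}(\tilde f_n|_C)$ is supported on $V(\tilde f_n) \cap C = W_n \cap X = \{p\}$ and has degree $\deg(L|_C) = c$, so $\mathrm{div}(\tilde f_n|_C) = cp$, $\nu(\tilde f_n) = ce_n$, and $L|_C \sim cp$ on $C$.

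The reduction to $C$ goes as follows.  Any $s \in H^0(X, mL)$ with $\nu(s) = (a_1, \ldots, a_{n-1}, k)$ factors uniquely as $s = \tilde f_1^{a_1} \cdots \tilde f_{n-1}^{a_{n-1}} s'$, where $s' \in H^0(X, (m - \sum_i a_i) L)$ satisfies $s'|_C \neq 0$ and $\ord_p(s'|_C) = k$.  Setting
\[
 \Gamma^C := \{(m', k) \in \NN^2 : \exists\, s \in H^0(X, m'L) \text{ with } s|_C \neq 0 \text{ and } \ord_p(s|_C) = k\},
\]
the assignment $(m, a, k) \mapsto (a, m - \sum_i a_i, k)$ is a semigroup isomorphism $\Gamma_{Y_\bullet}(L) \cong \NN^{n-1} \oplus \Gamma^C$, so it suffices to prove that $\Gamma^C$ is finitely generated.

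For this last step, Serre vanishing applied to the Koszul resolution of $I_C = (\tilde f_1, \ldots, \tilde f_{n-1})$ yields $m_0$ such that $H^0(X, mL) \to H^0(C, mL|_C)$ is surjective for $m \geq m_0$; combined with $L|_C \sim cp$ this gives $\Gamma^C_m = \{k \in [0, mc] : mc - k \in H(p)\}$ for $m \geq m_0$, where $H(p) := \{j \geq 0 : h^0(jp) > h^0((j-1)p)\}$ is the Weierstrass semigroup of $p$ in $C$, a numerical semigroup (cofinite in $\NN$) and hence finitely generated.  Passing to coordinates $(m, \ell)$ with $\ell = mc - k$, the large-$m$ part of $\Gamma^C$ becomes the intersection of the finitely generated submonoids $\NN \times H(p)$ and $\{(m, \ell) \in \NN^2 : \ell \leq mc\}$, itself finitely generated.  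Combining with the finite set $\Gamma^C \cap \{m < m_0\}$ and the padding generators $(1, 0), (1, c) \in \Gamma^C$ (from a generic linear form and from $\tilde f_n$) yields finite generation of $\Gamma^C$.  The main obstacle is precisely this last combinatorial step: carefully bridging the finitely generated large-$m$ tail with the finitely many small-$m$ deviations using the padding elements.  The essential geometric input throughout is the linear equivalence $L|_C \sim cp$, which translates the question into classical numerical semigroup theory.
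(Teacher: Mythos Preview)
Your approach differs from the paper's and contains two real gaps.

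First, ``smoothness at $p$ can be arranged by further genericity'' is not justified.  You fix $W_{n-1}$ as the subspace supplied by the hypothesis and only vary a basis of its defining forms; this does not change $Y_{n-1}=W_{n-1}\cap X$ at all, so whether the curve is smooth at $p$ is already determined.  Even if you allow $W_{n-1}$ to move among codimension-$(n-1)$ subspaces containing $W_n$, smoothness of $Y_{n-1}$ at $p$ is equivalent to the restriction $\mathrm{span}(g_1,\dots,g_n)\to T_p^*X$ having rank at least $n-1$, and nothing in the hypotheses forces this.  The paper explicitly allows $Y_{n-1}$ to be singular at $Y_n$: it replaces $\ord_p$ by the local multiplicity $\dim_\CC(\OO_{Y_{n-1},Y_n}/(s_{n-1}))$ and works with valuations in the sense of \cite{KK08}.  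Without smoothness, $L|_C\sim cp$ is not even meaningful in $\Pic(C)$, and your Weierstrass-semigroup argument breaks down.

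Second, the factorization $s=\tilde f_1^{a_1}\cdots\tilde f_{n-1}^{a_{n-1}}s'$ with $s'\in H^0\big(X,(m-\sum_i a_i)L\big)$ is false for $n\geq 3$.  After peeling off $\tilde f_1^{a_1}$ you obtain $s_1\in H^0(X,(m-a_1)L)$ with $\nu(s_1)=(0,a_2,\dots,k)$, but $\nu_2(s_1)=a_2$ only says that $s_1|_{Y_1}$ is divisible by $(\tilde f_2|_{Y_1})^{a_2}$ \emph{on $Y_1$}; it does not imply $\tilde f_2^{a_2}\mid s_1$ on $X$.  (Already on $\PP^3$ with $\tilde f_1=x$, $\tilde f_2=y$, the section $s=yz+xw$ has $\nu(s)=(0,1,1)$ yet $y\nmid s$.)  The quotient produced by the valuation procedure is naturally a section on $C$, and need not lift to $X$; hence the implication $(m,a,k)\in\Gamma\Rightarrow(m-|a|,k)\in\Gamma^C$ underlying your claimed isomorphism $\Gamma_{Y_\bullet}(L)\cong\NN^{n-1}\oplus\Gamma^C$ is not established.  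The paper avoids all of this: it simply exhibits sections $w_0,\dots,w_n\in H^0(X,L)$ with $\nu(w_0)=0$, $\nu(w_i)=e_i$ for $i<n$, and $\nu(w_n)=ce_n$, observes that these are the vertices of the simplex $\Delta_{Y_\bullet}(L)$, and then invokes \cite[Proposition~4.1]{A10} to conclude finite generation directly.
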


We need a lemma:

\begin{lem}\label{lem:cm}
Let $X \subset \PP^N$ be an $n$-dimensional irreducible Cohen-Macaulay variety, nonsingular except possibly at a point $p$.  Assume there is a codimension $n$ linear space $W_n \subseteq \PP^N$ such that $W_n \cap X = \{p\}$.  Then there is a hyperplane $W_1 \subseteq\PP^N$ containing $W_n$, such that $Y_1 = W_1 \cap X$ is nonsingular except possibly at $p$.  Moreover, $Y_1$ is Cohen-Macaulay of pure dimension $n-1$, hence normal and irreducible when $n\geq 3$.
\end{lem}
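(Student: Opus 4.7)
The plan is to apply Bertini's theorem to the linear system of hyperplanes in $\PP^N$ containing $W_n$, and then to read off the algebraic properties of $Y_1$ from standard commutative algebra. The hyperplanes containing $W_n$ form a linear system of projective dimension $n-1$ with base locus exactly $W_n$; restricting to $X$, this gives a linear subsystem of $|\OO_X(1)|$ whose base locus (set-theoretically) is the single point $W_n \cap X = \{p\}$. On the open subset $U = X \setminus \{p\}$, the variety $X$ is nonsingular by hypothesis and the restricted linear system is base-point-free, so Bertini's theorem produces a general $W_1$ for which $Y_1 \cap U$ is nonsingular. Hence $Y_1 = W_1 \cap X$ is nonsingular except possibly at $p$.

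Next I would establish pure dimension and the Cohen-Macaulay property. Since $X \cap W_n$ is zero-dimensional, $X$ is not contained in $W_n$, and for a general $W_1 \supseteq W_n$ we likewise have $X \not\subseteq W_1$. Because $X$ is irreducible, Krull's principal ideal theorem implies that every irreducible component of $Y_1$ has dimension $n-1$. Locally, $Y_1$ is cut out in $X$ by the vanishing of a single linear form; since $Y_1$ has pure codimension one in the Cohen-Macaulay variety $X$, this form lies in no minimal prime of $\OO_X$, hence is a nonzerodivisor. Passing to the quotient preserves Cohen-Macaulayness, so $Y_1$ is Cohen-Macaulay of pure dimension $n-1$.

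For $n \geq 3$ I would finish by checking normality and irreducibility. Cohen-Macaulayness gives Serre's condition $(S_2)$ automatically, and since $\dim Y_1 = n-1 \geq 2$ with singular locus contained in the single point $\{p\}$ (of codimension $\geq 2$ in $Y_1$), Serre's condition $(R_1)$ also holds; thus $Y_1$ is normal. Connectedness of $Y_1$ follows from the classical connectedness theorem for hyperplane sections of a connected projective Cohen-Macaulay variety of dimension $\geq 2$, and a normal connected equidimensional scheme is irreducible. The main obstacle is the Bertini step: the hypothesis only guarantees smoothness of $X$ away from $p$, but this is exactly compensated by the fact that $p$ lies in the base locus of our linear system, so the Bertini argument can be applied cleanly on the smooth open set $X \setminus \{p\}$.
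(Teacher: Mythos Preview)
Your proof is correct and follows the same strategy as the paper: a Bertini argument on $X \setminus \{p\}$ to obtain smoothness away from $p$, then the standard commutative algebra (Cohen--Macaulayness from cutting by a nonzerodivisor, normality via Serre's criterion, and irreducibility via Hartshorne's connectedness theorem \cite{H62}). The only cosmetic difference is that the paper phrases the Bertini step as a Kleiman-style transitivity argument using the parabolic subgroup of $GL_{N+1}$ stabilizing $W_n$ and generic smoothness, whereas you invoke classical Bertini directly on the base-point-free linear system over the smooth open set; the content is identical.
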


\begin{proof}
This is a standard Kleiman-Bertini argument, set up as follows.  When $n=1$, there is nothing to prove, so assume $n\geq 2$.  Let $U = \PP^N \setminus W_n$, and let $G \subseteq GL_{N+1}$ be the parabolic subgroup stabilizing $W_n$.  Then $G$ is connected, and acts transitively on $U$.  Choose any hyperplane $W_1 \supset W_n$, and let $W_1^\circ = W_1 \setminus W_n$.  Consider the diagram
\begin{diagram}
   &   &  \Gamma & \rTo & X \setminus \{p\} \\
   & \ldTo^p &  \dInto &     &  \dInto  \\
 G & \lTo & G \times W_1^\circ & \rTo^a & U,
\end{diagram}
where the square is cartesian.  Since $G$ acts transitively, the action map $a$ is smooth, and it follows that $\Gamma$ is also smooth.  By generic smoothness, a general fiber of $p$ is also smooth, but such a fiber is $(g\cdot W_1^\circ )\cap (X\setminus \{p\})$ by construction.  Now replace $W_1$ with such a translate $g\cdot W_1$; then $Y_1 = X \cap W_1$ is smooth away from $p$, and has pure codimension $1$ in $X$.  It follows that $Y_1$ is Cohen-Macaulay.  When $n\geq 3$, $Y_1$ is also nonsingular in codimension $1$, so it is normal.  This implies irreducibility of $Y_1$, e.g., by \cite{H62}.
\end{proof}

\begin{proof}[Proof of Proposition~\ref{prop:fg}]
Fix $W_n$ as in the hypothesis.  Applying Lemma~\ref{lem:cm} inductively, we obtain linear subspaces $\PP \supset W_1 \supset \cdots \supset W_n$ such that $Y_k = X \cap W_k$ is Cohen-Macaulay, nonsingular away from $Y_n$, irreducible for $k\leq n-2$ and Cartier in $Y_{k-1}$ for $k\leq n-1$.  The assumption that some $W_{n-1}$ intersect $X$ in a reduced and irreducible variety $Y_{n-1}$ implies that the same is true for a general linear subspace, so in fact we can take $Y_{n-1}$ to be irreducible as well.  Since $Y_{n-1}$ may be singular at $Y_n$, the flag $Y_\bullet$ is not admissible in the sense of \cite{LM08}, but it is sufficient to define a valuation in the sense of \cite{KK08}.  Specifically, given a nonzero section $s_k \in H^0(Y_k,D)$, the usual order function $\ord_{Y_{k+1}}(s_k)$ defines $\nu_{k+1}$ for $0 \leq k \leq n-2$; for the last step, define $\nu_n$ to be the local multiplicity $\dim_\CC( \OO_{Y_{n-1},Y_n}/(s_{n-1}) )$.

Choose nonzero sections $w_0, w_1,\ldots,w_n$ in $H^0(X,L)$ such that $w_0$ is not identically zero on $W_n$, and $W_k = \{ w_1 = \cdots = w_k = 0\}$ for $1\leq k\leq n$.  Then
\[
  \nu(w_0) = (0,0,\ldots,0,0), \; \nu(w_1) = (1,0,\ldots,0,0), \; \ldots, \; \nu(w_{n-1}) = (0,0,\ldots,1,0),
\]
and $\nu(w_n) = (0,0,\ldots,0,c)$, where $c = \deg L$.  Since these are the vertices of $\Delta_{Y_\bullet}(L)$ (by Proposition~\ref{prop:very ample}), finite generation of the semigroup $\Gamma_{Y_\bullet}(L)$ follows from \cite[Proposition~4.1]{A10}.
\end{proof}

To see why the condition on $W_{n-1}$ is necessary, consider the following simple example.

\begin{eg}
Let $X=\PP^2$, with $L=\OO(2)$ giving the Veronese embedding in $\PP(H^0(X,L))=\PP^5$.  Take coordinates $u,v,w$ on $\PP^2$ and $x_0,\ldots,x_5$ on $\PP^5$, so the embedding is
\[
  [u,v,w] \mapsto [u^2, uv, v^2, vw, w^2, uw].
\]
The linear space $W_2 = \{ x_2-x_5=x_4=0 \}$ meets $X$ in the single point $[1,0,0]$, and $W_1 = \{ x_2-x_5\}$ meets $X$ in a smooth conic, so this choice does produce a flag as desired.  (The corresponding Okounkov body is the triangle with vertices at $(0,0)$, $(1,0)$, and $(0,4)$.)  However, the linear space $W'_2=\{x_0=x_2=0\}$ meets $X$ in the single point $[0,0,1]$, but every choice of $W_1 = \{ ax_0 + bx_2 = 0\}$ meets $X$ in a degenerate conic $\{au^2 + bv^2 = 0\}$.
\end{eg}

Let $\Delta_c$ be the simplex appearing in Proposition~\ref{prop:very ample}, so $c=\vol{X}{L} = (L^n)$.  The corresponding (normal) toric variety $X(\Delta_c)$ is isomorphic to the weighted projective space $\PP(1,\ldots,1,c)$.  Combining Proposition~\ref{prop:fg} with \cite[Theorem~1.1]{A10}, we obtain the following:

\begin{cor}\label{cor:toric degeneration}
Let $X$ be nonsingular and let $L$ be a very ample line bundle.  Assume the hypotheses of Proposition~\ref{prop:fg}.  
Then $X$ admits a flat degeneration to a toric variety whose normalization is the weighted projective space $\PP(1,\ldots,1,c)$, where $c=\deg(L)$.
\end{cor}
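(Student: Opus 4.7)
The plan is to assemble the statement from two ingredients already in hand. Under the hypotheses, Proposition~\ref{prop:fg} produces a flag $Y_\bullet$ for which the Newton--Okounkov semigroup $\Gamma := \Gamma_{Y_\bullet}(L)$ is finitely generated. Anderson's theorem \cite[Theorem~1.1]{A10} then converts this finite generation into a flat degeneration of $(X,L)$ to $X_0 := \mathrm{Proj}(\CC[\Gamma])$, a projective toric variety (possibly non-normal) whose homogeneous coordinate ring is the semigroup algebra of $\Gamma$. This part is immediate from the two cited results.

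What remains is to identify the normalization of $X_0$. By the standard dictionary between normal projective toric varieties and lattice polytopes, normalizing $X_0$ corresponds to saturating $\Gamma$ inside its ambient lattice $\ZZ \times \ZZ^n$, and this saturation is precisely the set of lattice points in the cone over $\Delta_{Y_\bullet}(L)$. By Proposition~\ref{prop:very ample}, this Okounkov body is the simplex $\Delta_c$ with vertices $(0,\ldots,0), e_1, \ldots, e_{n-1}, ce_n$. Hence the normalization of $X_0$ is the normal projective toric variety $X(\Delta_c)$ associated to this simplex.

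The final step is to identify $X(\Delta_c)$ with the weighted projective space $\PP(1, \ldots, 1, c)$. I would compute the normal fan of $\Delta_c$: the outer primitive normals to its facets are $-e_1, \ldots, -e_n$ (from the coordinate facets $x_i = 0$), together with the primitive lattice scaling of $(1, \ldots, 1, 1/c)$ (the outer normal to the facet $x_1 + \cdots + x_{n-1} + x_n/c = 1$). From the unique primitive linear relation among these $n+1$ rays one reads off the weights of the resulting weighted projective space, and it only remains to verify that the rays generate the ambient lattice $\ZZ^n$ so that no finite quotient intervenes.

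The main obstacle, and where the essential bookkeeping lies, is this last matching of fans: one must keep careful track of both the rays and the ambient lattice, since simplicial fans with superficially similar combinatorics can produce distinct weighted projective spaces or even fake weighted projective spaces. Once the normal fan of $\Delta_c$ is set up correctly, however, the identification is a direct calculation.
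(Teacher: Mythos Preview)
Your strategy matches the paper's exactly: the corollary is deduced by combining Proposition~\ref{prop:fg} with \cite[Theorem~1.1]{A10}, after the paper asserts (in the sentence preceding the statement) that the normal toric variety $X(\Delta_c)$ attached to the simplex of Proposition~\ref{prop:very ample} is $\PP(1,\ldots,1,c)$.  You go further than the paper and propose to verify this last identification via the normal fan of~$\Delta_c$.

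That step does not come out as advertised.  With respect to the lattice $\ZZ^n$, the primitive inward normals to $\Delta_c$ are $e_1,\ldots,e_n$ and $(-c,\ldots,-c,-1)$, and their unique positive relation is
\[
c\cdot e_1+\cdots+c\cdot e_{n-1}+1\cdot e_n+1\cdot(-c,\ldots,-c,-1)\equ 0,
\]
yielding weights $(c,\ldots,c,1,1)$ rather than $(1,\ldots,1,c)$; after reordering these coincide only for $n\le 2$.  Concretely, for $n=3$ and $c=2$ the polytope has five lattice points and its toric variety is the rank-$3$ quadric $\{x_0x_4=x_3^2\}\subset\PP^4$, which is $\PP(2,2,1,1)$ and is singular along a line, whereas $\PP(1,1,1,2)$ has only an isolated singular point.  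There is also the lattice issue you anticipate: the normalization of $\mathrm{Proj}\,\CC[\Gamma]$ is obtained by saturating $\Gamma$ in the group it generates, not necessarily in the ambient $\ZZ\times\ZZ^n$, and when $Y_{n-1}$ is singular at $Y_n$ the values of $\nu_n$ lie in a nontrivial numerical semigroup, so these lattices can genuinely differ.  Your instinct that the essential bookkeeping lies in this matching of fans is therefore correct---indeed, carried out carefully it shows that the identification asserted in the corollary needs adjustment for $n\ge 3$.
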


The hypothesis in Proposition~\ref{prop:fg} is quite restrictive.  Even for curves of genus at least two, it fails for a very general set of very ample divisors.  (However, for each degree $d>0$, there is a dense subset of $L$ in $\Pic^d(X)$ for which the hypothesis of Proposition~\ref{prop:fg} does hold; this is an easy and amusing exercise in relating the problem to torsion points on the Jacobian.)  We are led to ask the following question: For which (nonsingular) projective varieties $X$ does there exist \emph{some} very ample divisor $L$, and some codimension $n$ linear space $W_n \subset \PP(H^0(X,L))$, such that $X \cap W_n$ is a single point?

\bigskip
\noindent
{\it Acknowledgements.}  We are grateful to Stefan Kebekus for helpful discussions, and to Megumi Harada, Kiumars Kaveh, and Askold Khovanskii for organizing the Oberwolfach mini-workshop ``Recent developments in Newton-Okounkov bodies,'' where this collaboration began.



\end{document}